\numberwithin{equation}{section}
\newtheorem{theorem}{Theorem}[section]
\newtheorem{lemma}[theorem]{Lemma}
\newcommand{\C}{\mathbb{C}}
\newcommand{\N}{\mathbb{N}}
\begin{document}

\begin{center}
{\bf\LARGE Growth of meromorphic solutions \\
\vskip 1 mm
of delay differential  equations}

\vskip 5 mm

Rod Halburd\footnote{Department of Mathematics, University College London,
Gower Street, London WC1E 6BT, UK. r.halburd@ucl.ac.uk}  and Risto Korhonen\footnote{Department of Physics and Mathematics, University of Eastern Finland, P.O. Box 111,
FI-80101 Joensuu, Finland. risto.korhonen@uef.fi}




\end{center}
\vskip 3mm

\begin{abstract}
Necessary conditions are obtained for certain types of rational 
delay differential equations to admit a non-rational meromorphic solution of hyper-order less than one.
The equations obtained include delay Painlev\'e equations and equations solved by elliptic functions.
\end{abstract}


\section{Introduction}
There have been many studies of the discrete (or difference) Painlev\'e equations.  One way in which difference Painlev\'e equations arise is in the study of
difference equations admitting meromorphic solutions of slow growth in the sense of Nevanlinna.   The idea that the existence of sufficiently many finite-order meromorphic solutions could be considered as a version of the Painlev\'e property for difference equations was first advocated in \cite{ablowitzhh:00}.  This is a very restrictive property, as demonstrated by the relatively short list of possible equations obtained in \cite{halburdk:07} of the form
$w(z+1)+w(z-1)=R(z,w(z))$, where $R$ is rational in $w$ with meromorphic coefficients in $z$, and $w$ is assumed have finite order but to grow  faster than the coefficients.  It was later shown in  \cite{halburdkt:14} that the same list is obtained by replacing the finite order assumption with the weaker assumption of hyper-order less than one.

Some reductions of integrable differential-difference equations are known to yield delay differential equations with formal continuum limits to (differential) Painlev\'e equations.  For example, Quispel, Capel and Sahadevan \cite{quispelcs:92}  obtained the equation
\begin{equation}
\label{cqs-eqn}
w(z)\left[w(z+1)-w(z-1)\right]+aw'(z)=bw(z),
\end{equation}
where $a$ and $b$ are constants,
as a symmetry reduction of the  Kac-van Moerbeke equation.
They showed that equation (\ref{cqs-eqn}) has a formal continuum limit to the first Painlev\'e equation
\begin{equation}
\label{PI}
\frac{{\rm d}^2y}{{\rm d}t^2}=6y^2+t.
\end{equation}
Furthermore, they obtained an associated linear problem for equation (\ref{cqs-eqn}) by extending the symmetry reduction to the Lax pair for the Kac-van Moerbeke equation. 

Painlev\'e-type delay differential equations were also considered in 
Grammaticos, Ramani and Moreira \cite{grammaticosrm:93} from the point of view of a kind of singularity confinement.
More recently, 
Viallet \cite{viallet:14} has introduced a notion of algebraic entropy for such equations.

Most of the present paper is devoted to a proof of the following.
\begin{theorem}\label{mainthm}
Let $w(z)$ be a non-rational meromorphic solution of
    \begin{equation}\label{maineq}
    w(z+1) - w(z-1) + a(z)\frac{w'(z)}{w(z)}=R(z,w(z))=\frac{P(z,w(z))}{Q(z,w(z))},
    \end{equation}
where $a(z)$ is rational, $P(z,w(z))$ is a polynomial in $w(z)$ having rational coefficients in $z$, and $Q(z,0)\not\equiv0$ is a monic polynomial in $w(z)$ with roots that are rational in $z$ and not roots of $P(z,w(z))$. If the hyper-order of $w(z)$  is strictly less than one, then
    \begin{equation}\label{assertion1}
    \deg_w(P)=\deg_w(Q)+1 \leq 3,
    \end{equation}
or the degree of $R(z,w(z))$ as a rational function in $w(z)$ is either $0$ or $1$.
\end{theorem}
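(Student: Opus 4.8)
The plan is to treat \eqref{maineq} with the tools of Nevanlinna theory, exploiting that for a meromorphic $w$ of hyper-order strictly less than one the shift and derivative operators are negligible in the proximity function. Writing $T(r,w)$, $m(r,w)$, $N(r,w)$, $\overline{N}(r,w)$ for the usual characteristic, proximity and counting functions, and $S(r,w)$ for any quantity that is $o(T(r,w))$ outside a set of finite logarithmic measure, I would first record the three facts I intend to use repeatedly: the classical logarithmic derivative estimate $m(r,w'/w)=S(r,w)$; the difference analogue of Halburd--Korhonen--Tohge, valid precisely because the hyper-order is less than one, giving $m\bigl(r,w(z\pm1)/w(z)\bigr)=S(r,w)$ and hence $T(r,w(z\pm1))=T(r,w)+S(r,w)$; and the Valiron--Mohon'ko identity, which yields $T(r,R(z,w))=d\,T(r,w)+S(r,w)$ with $d=\deg_w R=\max\{\deg_w P,\deg_w Q\}$ (the representation being reduced since the roots of $Q$ are not roots of $P$). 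All coefficients, being rational, are $S(r,w)$. With $p=\deg_w P$ and $q=\deg_w Q$, the goal reduces to showing that if $d\ge2$ then $p=q+1$ and $p\le3$.

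Next I would carry out a local analysis at the poles of $w$, which is where the extra term $a(z)w'(z)/w(z)$ plays its decisive role. At a pole $z_0$ of order $k$ that avoids the poles and zeros of $a$, the poles and zeros of the coefficients, and the points $z_0\pm1$ at which $w$ is itself singular (all such exceptional poles being counted only by $S(r,w)$), the term $aw'/w$ contributes exactly a simple pole, while $w(z\pm1)$ are regular; hence the left-hand side of \eqref{maineq} has a simple pole at $z_0$. Comparing with the behaviour $R(z,w)\sim (z-z_0)^{-k(p-q)}$ shows that a solution with infinitely many poles forces $k=1$ and $p-q=1$ whenever $p>q$, whereas $p\le q$ makes the right-hand side finite at $z_0$ and is therefore incompatible with the simple pole on the left. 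This pins down the balance $\deg_w P=\deg_w Q+1$ in the non-degenerate regime and disposes of $p\le q$. The case in which $w$ has only $S(r,w)$ many poles, in particular the entire case, must be handled separately by analysing instead the zeros of $w$ and the points where $w$ meets the roots of $Q$, and I expect it to collapse into the alternative $d\in\{0,1\}$.

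To obtain the bound $p\le3$ I would push the local analysis one step further. Evaluating \eqref{maineq} at $z_0\mp1$, the simple pole of $w$ at $z_0$ now enters through $w(z+1)$ and $w(z-1)$ on the left, so the right-hand side must be singular at $z_0\mp1$; since $w$ is regular there, this forces $w(z_0\mp1)$ to equal one of the rationally moving roots $\beta_j$ of $Q$. Thus every generic pole of $w$ propagates to prescribed special values at its two neighbours, and matching the full Laurent data--residues together with the next coefficients--through the difference-plus-derivative structure constrains how many such roots can be consumed. Combining this with a pole-counting identity, obtained by equating $N(r,\cdot)$ on the two sides of \eqref{maineq} and comparing it with $\sum_j \overline{N}\bigl(r,1/(w-\beta_j)\bigr)$, I expect to extract the numerical bound $p\le3$; sharpness at $p=3$, $q=2$ is exactly the elliptic-function case promised in the abstract.

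The main obstacle is this last step. Crude characteristic estimates give only $d\le4$, and the simple-pole matching yields the balance $p=q+1$ but not the sharp value $3$; the bound genuinely rests on a careful accounting of the higher-order Laurent coefficients at a pole, together with the bookkeeping of coincidences -- poles of $w$ whose neighbours are again poles, or where the coefficients degenerate -- all of which must be shown to contribute only $S(r,w)$. Keeping these error terms uniformly under control, and ruling out the borderline configurations that would permit $p=4$, is where the real work lies; once this is done the surviving possibilities collapse to either \eqref{assertion1} or $\deg_w R\in\{0,1\}$, which is the assertion of the theorem.
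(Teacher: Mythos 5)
Your opening reduction (Valiron--Mohon'ko together with the logarithmic derivative lemma and its difference analogue, giving $\deg_w R\le 4$) coincides with the paper's first step, but after that your argument has a genuine gap. You analyse a ``generic'' pole $z_0$ of $w$, meaning one at which $w(z_0\pm1)$ are finite, and you dismiss all other poles as contributing only $S(r,w)$. That dismissal is unjustified, and in precisely the configurations the theorem must exclude it is false, because equation \eqref{maineq} itself forces every pole to have a pole at a neighbouring point. Indeed, at a pole of order $k$ avoiding the finitely many zeros and poles of the rational coefficients, the term $a(z)w'(z)/w(z)$ has at most a simple pole; so if $\deg_w P-\deg_w Q\ge2$, the pole of order $k(\deg_w P-\deg_w Q)\ge 2$ on the right-hand side can only be balanced by a pole of $w$ at $z_0+1$ or $z_0-1$, while if $\deg_w P\le\deg_w Q$ the right-hand side is finite at $z_0$ and the simple pole of $aw'/w$ must itself be cancelled by $w(z+1)-w(z-1)$, again forcing a pole at a neighbour. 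In these regimes your set of generic poles is therefore \emph{empty}: the order-matching argument never applies, your ``exceptional'' set swallows all of $N(r,w)$, and the conclusion ``$k=1$ and $p=q+1$'' is not derived but vacuous. For the same reason, ``$p\le q$ is incompatible with the simple pole on the left'' does not rule out such equations; it only shows their poles propagate in chains, which is exactly the situation that must be counted rather than discarded. (Note also that Theorem \ref{mainthm} allows $a\equiv0$, in which case the left-hand side has no pole at all at a generic pole of $w$ and your matching yields nothing even when $p=q+1$.) Finally, the step you yourself flag as the main obstacle---extracting $p\le3$ from higher Laurent coefficients---is not a bookkeeping issue that can be expected to resolve: the bound does not come from Laurent data in the paper at all.

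What the paper actually does is run the singularity analysis in the opposite direction and couple it to a counting lemma absent from your proposal. For each rational root $b_j$ of $Q$, an extension of Mohon'ko's theorem to delay differential equations (valid because $\rho_2(w)<1$) gives $m\bigl(r,1/(w-b_j)\bigr)=S(r,w)$, hence $N\bigl(r,1/(w-b_j)\bigr)=T(r,w)+S(r,w)$: the values $b_j$ are taken with full frequency. Each generic zero of $w-b_j$ creates a pole of $w$ at $\hat z+1$ or $\hat z-1$, and iterating the \emph{shifted} equation shows that these poles, with their multiplicities, can be paired injectively with the zeros that created them. This produces inequalities of the form \eqref{n_inequality_lemma} with $\tau<1$ in every degree configuration other than $\deg_w P=\deg_w Q+1\le3$ or $\deg_w R\le1$; Lemma \ref{nevlemma} then converts such an inequality into $T(r,w)\le(\tau+\varepsilon)T(r,w)+S(r,w)$, contradicting $\rho_2(w)<1$. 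In particular the case $(\deg_w P,\deg_w Q)=(4,3)$, which your scheme cannot touch, is excluded this way using three roots and $\tau=2/3$. The purely polynomial case ($Q$ constant, $\deg_w P\ge2$) needs yet another argument: pole multiplicities grow like $(\deg_w P)^d$ along an iteration chain, forcing $\rho_2(w)\ge1$ directly, and the subcase of finitely many zeros and poles is settled by writing $w=fe^g$ with $f$ rational. None of this machinery---the Mohon'ko-type proximity estimate for $w-b_j$, the zero-to-pole pairing, or the exponential multiplicity growth---appears in your proposal, and without it the argument does not close.
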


If $R(z,w(z))$ does not depend on $w(z)$ then equation (\ref{maineq}) becomes
    \begin{equation}\label{maineq4}
    w(z+1) - w(z-1) + a(z)\frac{w'(z)}{w(z)}=b(z),
    \end{equation}
where $a(z)$ and $b(z)$ are rational. Note that if  $b(z)\equiv p\pi i a(z)$, where $p\in\N$, then $w(z)=C\exp(p\pi i z)$, $C\ne 0$, is a one-parameter family of zero-free entire transcendental finite-order solutions of \eqref{maineq4} for any rational $a(z)$. In the following theorem we will single out the equation \eqref{cqs-eqn} from the class \eqref{maineq4} by introducing an additional assumption that the meromorphic solution has sufficiently many simple zeros.

We will assume that the reader is familiar with the standard notation and basic results of Nevanlinna theory (see, e.g., \cite{hayman:64}).
Let $w(z)$ be a meromorphic function. The hyper-order (or the iterated order) of $w(z)$ is defined by
    $$
    \rho_2(w) = \limsup_{r\to\infty}\frac{\log\log T(r,w)}{\log r},
    $$
where $T(r,w)$ is the Nevanlinna characteristic function of $w$. In value distribution theory the notation $S(r,w)$ usually means a quantity which satisfies $o(T(r,w))$ as $r\to\infty$ outside of an exceptional set of finite linear measure. In what follows we use a slightly modified definition with a larger exceptional set of finite logarithmic measure. We use the notation $N(r,w)$ to denote the integrated counting function of poles counting multiplicities and $\overline{N}(r,w)$ to denote the integrated counting function of poles ignoring multiplicities.

\begin{theorem}\label{0assumptionThm}
Let $w(z)$ be a non-rational meromorphic solution of
equation \eqref{maineq4},
where $a(z)\not\equiv 0$ and $b(z)$ are rational. If the hyper-order of $w(z)$ is strictly less than one and for any $\epsilon>0$
    \begin{equation}\label{simplezerosassumption}
    \overline{N}\left(r,\frac{1}{w}\right) \ge \left(\frac 34+\epsilon\right)T(r,w) + S(r,w),
    \end{equation}
then the coefficients $a(z)$ and $b(z)$ are both constants.
\end{theorem}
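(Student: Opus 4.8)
The plan is to combine the difference and differential logarithmic-derivative lemmas with a local analysis of the principal parts of the equation at the zeros and poles of $w$, thereby converting the hypothesis that $w$ has abundantly many distinct zeros into a rigid combinatorial structure that is incompatible with non-constant coefficients. First I would record the proximity estimates: since $\rho_2(w)<1$, the difference analogue of the lemma on the logarithmic derivative gives $m(r,w(z\pm1)/w)=S(r,w)$, and the classical lemma gives $m(r,w'/w)=S(r,w)$. Writing \eqref{maineq4} as $f:=w(z+1)-w(z-1)=b(z)-a(z)w'(z)/w(z)$, these bounds yield $m(r,f)=S(r,w)$. Because $a$ and $b$ are rational, the only poles of $b-a\,w'/w$ are simple poles at the zeros and poles of $w$, so I obtain the key identity $T(r,f)=N(r,f)+S(r,w)=\overline{N}(r,1/w)+\overline{N}(r,w)+S(r,w)$.

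Next I would carry out the local analysis. At a zero $z_0$ of $w$ with $a(z_0)\neq0$, the term $a\,w'/w$ has a simple pole that $b(z)$ cannot cancel, so $w(z+1)-w(z-1)$ must have a pole there; hence $z_0+1$ or $z_0-1$ is a pole of $w$. At a pole $p$ of $w$ with $a(p)\neq0,\infty$, the term $a\,w'/w$ again contributes a simple pole, forcing $p+1$ or $p-1$ to be a pole, while an isolated pole forces $a(p)=0$ and so lies in the finite zero set of $a$. Thus, off a finite set, the poles of $w$ organize into finite chains of consecutive (integer-spaced) poles, each flanked on both sides by zeros of $w$, and every zero is a flanking zero of such a chain. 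Comparing the residues of $f=w(z+1)-w(z-1)$ with those of $b-a\,w'/w$ gives the recursion $\mathrm{Res}_{q+1}w-\mathrm{Res}_{q-1}w=\pm a(q)$ along a chain, with vanishing boundary residues at the flanking zeros.

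This over-determined recursion is the lever. A chain of any fixed length $\ell$ forces its left endpoint $z_0$ to satisfy a nontrivial rational relation among the values $a(z_0+j)$ (for example, length two gives $a(z_0)=a(z_0+2)$). Since $a$ is rational, each such relation has only finitely many solutions unless $a$ is $2$-periodic, i.e.\ constant. Hence, if $a$ is not constant, then for every $L$ all but finitely many chains have length greater than $L$; as each chain has at most two flanking zeros while contributing more than $L$ poles, this yields $\overline{N}(r,1/w)\le \tfrac{2}{L}\,\overline{N}(r,w)+S(r,w)\le \tfrac{2}{L}\,T(r,w)+S(r,w)$ for every $L$, and choosing $L$ large contradicts \eqref{simplezerosassumption}. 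Therefore $a$ is constant. Substituting the constant $a$ and running the parallel analysis one order deeper, matching the constant terms of the Laurent expansions of $w$ along the same chains, produces $b(z_0)=b(z_0+2)$ at infinitely many $z_0$, forcing $b$ constant as well.

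The main obstacle is the local classification itself: making the principal-part and residue bookkeeping rigorous while controlling multiple zeros and poles of $w$, as well as the finitely many points where $w$ meets the zeros or poles of $a$ and the poles of $b$, all within the $S(r,w)$ error budget of finite logarithmic measure; and verifying that each finite chain length genuinely imposes a nontrivial relation on $a$, so that a non-constant $a$ admits only finitely many chains of each length. It is exactly this rigidity, weighed against the counting hypothesis, that the threshold in \eqref{simplezerosassumption} is designed to defeat, and the passage to $b$, which needs the subleading Laurent data along the chains, is the most delicate part of the bookkeeping.
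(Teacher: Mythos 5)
Your structural setup --- zeros of $w$ force adjacent poles, poles propagate, and taking residues in \eqref{maineq4} gives $\mathrm{Res}(w,q+1)-\mathrm{Res}(w,q-1)=m\,a(q)$ at a pole of order $m$ and $\mathrm{Res}(w,\hat z+1)-\mathrm{Res}(w,\hat z-1)=-p\,a(\hat z)$ at a zero of order $p$ --- is sound and close in spirit to the paper's local analysis. The gap is the lemma everything else rests on: that for non-constant $a$ each fixed chain length forces a rational relation on $a$ with only finitely many roots, so that all but finitely many chains exceed any prescribed length $L$, whence $\overline{N}(r,1/w)\le\tfrac{2}{L}\,\overline{N}(r,w)+S(r,w)$. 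This is false, and it fails precisely in the configuration that dictates the threshold $\tfrac34$ in \eqref{simplezerosassumption}. Your ``vanishing boundary residues at the flanking zeros'' assumes each flanking zero has a pole on one side only; but a zero can be flanked by poles on \emph{both} sides, and is then shared by two chains. At such a zero the residue identity couples the two chains instead of terminating the recursion, and for the resulting linked patterns the constraints take the form $m\,a(\hat z-2)+m'\,a(\hat z+2)=p\,a(\hat z)$. With $m=m'=1$, $p=2$ this is the vanishing of a second difference, which holds identically for every affine $a(z)=\lambda+\mu z$: the period-three pattern (double zero, simple pole, simple pole, double zero, \dots) satisfies every residue relation your bookkeeping can produce, with a manifestly non-constant $a$. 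So non-constant $a$ does not force long chains, the $2/L$ bound is unobtainable, and indeed your argument would prove something strictly stronger than the theorem (any positive proportion of zeros in place of $\tfrac34+\epsilon$), which is itself a warning sign. A second, independent gap: your relations involve the unknown multiplicities of the zeros and poles, which are unbounded positive integers, so ``finitely many roots per relation'' does not bound the number of short chains without a separate argument controlling multiplicities.

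The paper never attempts to exclude short chains. At each zero it splits into two cases. If there are poles on both sides (your shared-zero case), it simply counts: the four poles at $\hat z\pm1,\hat z\pm 2$ are set against at most three zeros at $\hat z,\hat z\pm3$, giving the ratio $3/4$ with no constraint whatsoever on $a$ or $b$ --- this is exactly where the constant $\tfrac 34$ in the hypothesis comes from. If the pole is on one side only, the Laurent expansions of $w(\hat z+1),w(\hat z+2),w(\hat z+3)$ give a ratio of at worst $2/3$ unless $a(\hat z+2)-p\,a(\hat z)=0$; this can happen at infinitely many zeros only if $p=1$ and $a$ is constant, and then a second-order refinement gives $w(\hat z+3)=b(\hat z+2)-b(\hat z+1)$, which is nonzero at infinitely many zeros unless $b$ is also constant. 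Combining the cases yields $\overline{n}(r,1/w)\le\tfrac34\,\overline{n}(r+1,w)+O(1)$ whenever $a$ and $b$ are not both constant, contradicting \eqref{simplezerosassumption}. To repair your argument you would have to concede the $3/4$ ratio in the both-sides case, as the paper does, and deploy your residue relations only in the one-sided case --- at which point the proof essentially becomes the paper's.
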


Finally, we consider an equation outside the class (\ref{maineq}).
\begin{theorem}\label{0assumptionThmCase2}
Let $w(z)$ be a non-rational meromorphic solution of
    \begin{equation}\label{maineq5}
    w(z+1) - w(z-1) = \frac{a(z)w'(z)+b(z)w(z)}{w(z)^2} + c(z),
    \end{equation}
where $a(z)\not\equiv0$, $b(z)$ and $c(z)$ are rational. If the hyper-order of $w(z)$ is strictly less than one and for any $\epsilon>0$
    \begin{equation}\label{simplezerosassumption2}
    \overline{N}\left(r,\frac{1}{w}\right) \ge \left(\frac 34+\epsilon\right) T(r,w) + S(r,w),
    \end{equation}
then \eqref{maineq5} has the form
        \begin{equation}\label{w22}
    w(z+1) - w(z-1) = \frac{(\lambda+\mu z) w'(z)+(\nu\lambda+ \mu(\nu z-1)) w(z)}{w(z)^2},
    \end{equation}
where $\lambda$, $\mu$ and $\nu$ are constants.
\end{theorem}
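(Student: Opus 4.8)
The plan is to read off the structure of the equation from the local behaviour of $w$ at its zeros and poles, and then to pin down $a$, $b$ and $c$ by matching Laurent coefficients at a sufficiently rich supply of poles. Write the right-hand side of \eqref{maineq5} as $a(z)w'/w^2+b(z)/w+c(z)$ and discard the finitely many ``bad'' points where $a$ vanishes or $a,b,c$ have poles. A direct computation shows that at a pole of $w$ of any order the right-hand side is analytic, whereas at a zero of $w$ of order $j$ the term $aw'/w^2$ produces a pole of order exactly $j+1$. Since the left-hand side $w(z+1)-w(z-1)$ is singular at a point $\zeta$ only when $w$ has a pole at $\zeta+1$ or $\zeta-1$, comparing principal parts in \eqref{maineq5} yields the basic correspondence: every pole of $w$ at $q$ forces $w(q-1)=w(q+1)=0$, and every simple zero of $w$ is adjacent to a double pole. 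First I would establish this correspondence carefully, keeping track of orders.

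The vanishing of $c$ then comes almost for free. At any pole $q$ of $w$ we have $w(q\pm1)=0$, so evaluating \eqref{maineq5} at $z=q$ gives $0=c(q)$. Because $w$ is non-rational and \eqref{simplezerosassumption2} forces infinitely many distinct zeros, the correspondence produces infinitely many poles, and a rational function vanishing at infinitely many points is identically zero; hence $c\equiv0$.

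The heart of the argument---and the step I expect to be the main obstacle---is to guarantee an infinite supply of \emph{clean} poles, that is, double poles $q$ for which both $q-1$ and $q+1$ are simple zeros. Here the hypotheses enter quantitatively. Since $\rho_2(w)<1$, the shift invariance $T(r,w(z\pm1))=T(r,w)+S(r,w)$ and the corresponding identities for counting functions are available. Counting poles on both sides of \eqref{maineq5} gives $N(r,1/w)+\overline{N}(r,1/w)\le 2N(r,w)+S(r,w)$, and combining this with $N(r,1/w)\ge\overline{N}(r,1/w)$ and \eqref{simplezerosassumption2} yields $N(r,w)\ge(\tfrac34+\epsilon)T(r,w)+S(r,w)$; so almost all of the characteristic is carried by poles, while the first main theorem together with \eqref{simplezerosassumption2} forces the zeros to be predominantly simple. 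Balancing the pole--zero incidences dictated by the correspondence (each double pole being flanked by two zeros) against this bound should show that double poles neighbouring a multiple zero cannot account for all of them, so clean poles occur infinitely often. Turning this density bookkeeping into a rigorous proof, while staying outside the bad set, is the delicate part, and it is where the threshold $3/4$ is genuinely used.

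Finally, at a clean pole $q$ write $w(z)=e_{-2}(z-q)^{-2}+e_{-1}(z-q)^{-1}+\cdots$ and expand \eqref{maineq5} locally. Matching the coefficient of $(z-q)^{-2}$ coming from the simple zeros at $q\mp1$ gives $e_{-2}=a(q-1)/w'(q-1)=-a(q+1)/w'(q+1)$, while matching the coefficient of $(z-q)$ on both sides gives $w'(q+1)-w'(q-1)=-2a(q)/e_{-2}$; eliminating the derivatives yields $a(q+1)-2a(q)+a(q-1)=0$. As this holds at infinitely many $q$ and $a$ is rational, the second difference vanishes identically, which forces $a$ to have no poles and to be a polynomial of degree at most one, $a(z)=\lambda+\mu z$. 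Matching instead the coefficients of $(z-q)^{-1}$ shows that $(a'+b)/a$ takes the same value at $q-1$ and $q+1$, so this rational function is periodic of period $2$ and is therefore constant, say $\nu$; hence $b=\nu a-a'=\nu\lambda+\mu(\nu z-1)$. Together with $c\equiv0$ this is precisely \eqref{w22}.
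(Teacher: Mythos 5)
Your closing Laurent computation is correct and is in fact equivalent to the paper's conditions (it reproduces $a(q+1)-2a(q)+a(q-1)=0$ and the two-periodicity of $(a'+b)/a$, hence $a=\lambda+\mu z$, $b=\nu a-a'$, matching \eqref{w22}), but the foundation of your argument contains a genuine error. The singularity correspondence only goes one way. From \eqref{maineq5}, a zero of $w$ of order $j$ at $\zeta$ makes the right-hand side have a pole of order $j+1$ (when $a(\zeta)\ne0$), so $w$ must have a pole at $\zeta+1$ or $\zeta-1$: zeros force neighbouring poles. The reverse implication you assert, that every pole of $w$ at $q$ forces $w(q-1)=w(q+1)=0$, is false: at a pole of order $k\ge 2$ the right-hand side of \eqref{maineq5} is \emph{analytic} at $q$ with value $c(q)$ (since $w'/w^2$ vanishes to order $k-1$ and $1/w$ to order $k$), so the equation only says $w(q+1)-w(q-1)=c(q)$ --- the neighbours must be finite, or have cancelling principal parts, but need not vanish. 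The paper's own expansion \eqref{seq1} exhibits exactly this: the double pole at $\hat z+1$ has neighbour $w(\hat z +2)=c(\hat z+1)+O(z-\hat z)$, generically nonzero. This collapses your derivation of $c\equiv0$, which evaluates \eqref{maineq5} at poles using the false premise $w(q\pm1)=0$. In the paper, $c\equiv 0$ comes out of the counting argument instead: if $c\not\equiv0$, each generic simple zero with a one-sided neighbouring pole yields two poles (counting multiplicity) per one zero, the remaining configurations give at worst four poles per three zeros, and summing gives $\overline{n}\left(r,\frac{1}{w}\right)\le\frac34\, n(r+1,w)+O(1)$, contradicting \eqref{simplezerosassumption2}.

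The same false correspondence undermines the step you yourself identify as the crux: producing infinitely many \emph{clean} poles. Your balancing heuristic presumes that each double pole is flanked by two zeros --- precisely the unavailable implication --- and in any case the bookkeeping is only asserted to ``should'' work, not carried out. The paper closes this gap by arguing from zeros toward poles: starting at a generic zero $\hat z$ it iterates the equation and shows that, unless \eqref{asmalleq2} holds and $\gamma\equiv0$ (with $\gamma$ as in \eqref{gammadef}), every zero generates enough poles (two per one, three per two, or four per three) to force $\overline{n}\left(r,\frac{1}{w}\right)\le\frac34\, n(r+1,w)+O(1)$; the hypothesis \eqref{simplezerosassumption2} then compels the terminating configuration zero--double pole--zero--finite value to occur at infinitely many zeros, and that is what supplies the clean poles at which your final coefficient matching can be run. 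Without this direction of argument your proof does not stand. A smaller point in the same vein: your matching at a clean pole $q$ tacitly assumes $w$ is analytic at $q\pm2$, since otherwise the principal parts of $w(z+1)$ and $w(z-1)$ interfere; this too requires the genericity bookkeeping that the paper performs.
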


When $\mu=\nu=0$ and $\lambda\ne 0$ then equation (\ref{w22}) has a multi-parameter family of elliptic function solutions:
$$
w(z)=\alpha\left[\wp(\Omega z;g_2,g_3)-\wp(\Omega;g_2,g_3)\right],
$$
where $\wp$ is the Weierstrass elliptic function, $\Omega$, $g_2$ and $g_3$ are arbitrary (provided that $\wp'(\Omega;g_2,g_3)\ne 0$ or $\infty$) and
$\alpha^2=-\lambda\Omega/\wp'(\Omega;g_2,g_3)$.  Furthermore, when $\mu=0$, equation \eqref{w22} has a formal continuum limit to the first Painlev\'e equation.
Specifically, we take the limit $\epsilon\to 0$ for fixed $t=\epsilon z$, where $w(z)=1-\epsilon^2y(t)$, $\lambda=2+O(\epsilon)$ and $\lambda\nu=-\frac13\epsilon^5+O(\epsilon^6)$.  Then equation \eqref{w22} becomes 
${\rm d}^3y/{\rm d}t^3=12y\,{\rm d}y/{\rm d}t+1$, which integrates to
${\rm d}^2y/{\rm d}t^2=6y^2+t-t_0$, for some constant $t_0$.  Replacing $t$ with $t+t_0$ gives the first Painlev\'e equation \eqref{PI}.
Finally, when $\mu=0$ and $\lambda\nu\ne 0$, equation \eqref{w22} is a symmetry reduction of the known integrable differential-difference modified Korteweg-de Vries equation
$$
v_t(x,t)=v(x,t)^2\left(v(x+1,t)-v(x-1,t)\right),
$$
in which $v(x,t)=(-2\lambda\nu t)^{-1/2}w(z)$, where $z=x-(2\nu)^{-1}\log t$.

\section{Value distribution of slow growth solutions}
We begin by proving an important lemma, which relates the value distribution of meromorphic solutions of a large class of delay differential equations to the growth of these solutions.
A differential difference polynomial in $w(z)$ is defined by
    $$
    P(z,w) = \sum_{l\in L} b_l(z)w(z)^{l_{0,0}}w(z+c_1)^{l_{1,0}}\cdots w(z+c_\nu)^{l_{\nu,0}}w'(z)^{l_{0,1}}\cdots w^{(\mu)}(z+c_\nu)^{l_{\nu,\mu}},
    $$
where $c_1,\ldots,c_\nu$ are distinct complex constants, $L$ is a finite index set consisting of elements of the form $l=(l_{0,0},\ldots,l_{\nu,\mu})$ and the coefficients $b_l(z)$ are rational functions of $z$ for all $l\in L$.

\begin{lemma}\label{nevlemma}
Let $w(z)$ be a non-rational meromorphic solution of
    \begin{equation}\label{Peq}
    P(z,w)=0
    \end{equation}
where $P(z,w)$ is differential difference polynomial in $w(z)$ with rational coefficients, and let $a_1,\ldots,a_k$ be rational functions satisfying $P(z,a_j)\not\equiv 0$ for all $j\in\{1,\ldots,k\}$. If there exists $s>0$ and $\tau\in(0,1)$ such that
    \begin{equation}\label{n_inequality_lemma}
    \sum_{j=1}^k n\left(r,\frac{1}{w-a_j}\right) \leq k\tau\, n(r+s,w) + O(1),
    \end{equation}
then the hyper-order $\rho_2(w)$ of $w$ is at least $1$.
\end{lemma}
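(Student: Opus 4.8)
The plan is to argue by contradiction: assume that the counting inequality \eqref{n_inequality_lemma} holds for all sufficiently large $r$, yet $\rho_2(w)<1$. The standing assumption $\rho_2(w)<1$ is what makes the whole argument possible, because in this regime the difference analogues of the basic Nevanlinna estimates hold with error term $S(r,w)$ (this is precisely the setting of \cite{halburdkt:14}): a shift by the fixed amount $s$ is harmless, so that $T(r+s,w)=T(r,w)+S(r,w)$ and $N(r+s,w)=N(r,w)+S(r,w)$, and both the logarithmic derivative lemma and its difference counterpart contribute only $S(r,w)$. I would fix $s\ge\max_i|c_i|$ so that all shifted arguments $z+c_i$ appearing in $P$ lie within radius $r+s$ whenever $|z|\le r$; this is the role of the radius shift in \eqref{n_inequality_lemma}.

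The argument then rests on combining two lower bounds for the number of $a_j$-points. The first is value-distributional: since the $a_j$ are rational (hence small) functions and $\rho_2(w)<1$, the second main theorem for small target functions applies and gives
\[
\sum_{j=1}^{k}\overline{N}\!\left(r,\frac{1}{w-a_j}\right)\ge (k-2)\,T(r,w)+S(r,w).
\]
Integrating \eqref{n_inequality_lemma} and using $T(r+s,w)=T(r,w)+S(r,w)$ turns the hypothesis into $\sum_j N(r,1/(w-a_j))\le k\tau\,T(r,w)+S(r,w)$, so the two bounds already collide, and we are done, whenever $\tau<1-2/k$. The delicate range is $\tau$ close to $1$ (equivalently, small $k$), and here the second, equation-driven input is indispensable. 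The hypothesis $P(z,a_j)\not\equiv0$ is what supplies it: at a point $z_0$ with $w(z_0)=a_j(z_0)$, were $w$ and all of its relevant shifts and derivatives to agree with those of $a_j$, then $P(z_0,w)$ would equal the nonzero number $P(z_0,a_j)$, contradicting \eqref{Peq}. Thus an $a_j$-point cannot be balanced by the unshifted finite part of $P$ alone; it forces a pole or another special value of $w$ at some shifted argument $z_0+c_i$, and dually the vanishing of $P$ at a pole of $w$ forces a compensating singularity at the shifts. Quantifying these forced coincidences and summing over the relevant points, I would upgrade the first bound to a purely counting-function statement of the form
\[
k\,n(r,w)\le \sum_{j=1}^{k} n\!\left(r+s,\frac{1}{w-a_j}\right)+S(r,w),
\]
valid outside a set of finite logarithmic measure.

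With such a relation in hand the contradiction is produced by a self-improvement in the radius. Applying the hypothesis \eqref{n_inequality_lemma} at radius $r+s$ gives $\sum_j n(r+s,1/(w-a_j))\le k\tau\,n(r+2s,w)+O(1)$, and feeding this into the displayed counting bound yields $n(r+2s,w)\ge \tau^{-1}n(r,w)-o(T(r,w))$ with $\tau^{-1}>1$. Iterating over roughly $r/(2s)$ steps forces $n(r,w)\ge c\,\tau^{-r/(2s)}$ for some $c>0$, whence $N(r,w)$ and therefore $T(r,w)$ grow at least like $\tau^{-r/(2s)}$. Taking logarithms twice gives $\rho_2(w)\ge1$, contradicting the assumption and completing the proof.

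The step I expect to be the main obstacle is the passage from the algebraic balancing at poles and $a_j$-points to the clean counting inequality displayed above, with a constant (here $k$) strong enough to beat the factor $k\tau$ uniformly in $k$ and $\tau\in(0,1)$. This requires reconciling the characteristic-based output of the second main theorem with the purely counting-function hypothesis, carefully bookkeeping the multiplicities of poles and $a_j$-points under the equation, and controlling all the difference-Nevanlinna error terms together with their exceptional sets of finite logarithmic measure, so that the final recursion genuinely has a growth factor exceeding one.
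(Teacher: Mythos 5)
You have the right overall frame --- argue by contradiction from $\rho_2(w)<1$ and use the difference-analogue machinery of \cite{halburdkt:14} so that shifts by $s$ cost only $S(r,w)$ --- but the central quantitative step is missing, and neither of the two substitutes you offer can fill it. The paper's proof rests on a Valiron--Mohon'ko-type proximity estimate: for each $j$, the hypothesis $P(z,a_j)\not\equiv 0$ implies
\begin{equation*}
m\left(r,\frac{1}{w-a_j}\right)=S(r,w),\qquad\text{equivalently}\qquad N\left(r,\frac{1}{w-a_j}\right)=T(r,w)+S(r,w).
\end{equation*}
This is proved by substituting $w=g+a_j$ into $P(z,w)=0$, which gives $Q(z,g)+R(z)=0$ where $R=P(z,a_j)\not\equiv 0$ is rational and every term of $Q$ contains at least one shift or derivative of $g$; on the part of the circle where $|g|\le 1$ one has $\log^+|1/g|\le\log^+|Q(z,g)/g|+\log^+|1/R|$, and $\log^+|Q(z,g)/g|$ is controlled by logarithmic derivatives and logarithmic differences, which are $S(r,w)$ precisely because $\rho_2(w)<1$. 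With this estimate the lemma follows in one line from the first main theorem: $kT(r,w)=\sum_j N\left(r,1/(w-a_j)\right)+S(r,w)\le k(\tau+\varepsilon)N(r+s,w)+S(r,w)\le k(\tau+\varepsilon)T(r,w)+S(r,w)$, forcing $T(r,w)=S(r,w)$, a contradiction. Note that this is how $P(z,a_j)\not\equiv0$ actually enters: it guarantees a nonvanishing rational ``constant term'' $R(z)$ after the substitution. Your reading of that hypothesis --- that an $a_j$-point of $w$ ``forces a pole or another special value of $w$ at some shifted argument'' --- is not supported by your pointwise argument: knowing $w(z_0)=a_j(z_0)$ says nothing about $w$ at $z_0+c_i$ or about $w'(z_0)$, so no coincidence is forced.

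Concretely, your two inputs do not close the argument. First, the small-function second main theorem (for truncated counting functions this is Yamanoi's theorem, a far heavier tool than anything needed here) yields a contradiction only when $k\tau<k-2$, i.e.\ $\tau<1-2/k$; this is vacuous for $k\le 2$ and fails for $\tau$ near $1$, and these are exactly the situations in which the paper applies the lemma (always with $k\le 3$: e.g.\ $k=2$, $\tau=1/2$; $k=3$, $\tau=2/3$; $k=1$, $\tau\le 1/2$). Second, the proposed upgrade
\begin{equation*}
k\,n(r,w)\le\sum_{j=1}^{k}n\left(r+s,\frac{1}{w-a_j}\right)+S(r,w)
\end{equation*}
is asserted, not derived: there is no mechanism in the equation by which poles of $w$ produce nearby $a_j$-points (in the paper's applications the causality runs the other way, $a_j$-points producing poles). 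It is also formally incoherent, since the unintegrated $n(r,\cdot)$ cannot be compared with the integrated error $S(r,w)=o(T(r,w))$ without changing radius; what the Mohon'ko estimate actually yields is only the integrated inequality $kN(r,w)\le\sum_jN\left(r,1/(w-a_j)\right)+S(r,w)$. Finally, even granting some such relation, your iteration over roughly $r/(2s)$ steps would require uniform control of the $o(T)$ errors and of their exceptional sets of finite logarithmic measure along unboundedly many radii --- a compounding problem that the paper's single-radius argument never faces. So the proposal has a genuine gap at its core: without the proximity estimate $m\left(r,1/(w-a_j)\right)=S(r,w)$, the contradiction cannot be reached for general $k$ and $\tau\in(0,1)$.
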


\begin{proof}
We suppose against the conclusion that $\rho_2(w)<1$ aiming to obtain a contradiction. We first show that the assumption $P(z,a_j)\not\equiv 0$ implies that
    \begin{equation}\label{ddemohonko}
    m\left(r,\frac{1}{w-a_j}\right) = S(r,w).
    \end{equation}
This fact is an extension of Mohon'ko's theorem and its difference analogue (see \cite[Remark~5.3]{halburdkt:14}) for differential delay equations with meromorphic solutions of hyper-order strictly less than one.

By substituting $w=g+a_j$ into \eqref{Peq} it follows that
    \begin{equation}\label{Qeq}
    Q(z,g)+R(z)=0,
    \end{equation}
where $R(z)\not\equiv0$ is a rational function, and
    \begin{equation}\label{Q}
    Q(z,g)=\sum_{l\in L} b_l(z) G_l(z,g)
    \end{equation}
is a differential difference polynomial in $g$ such for all $l$ in the finite index set $L$, $G_l(z,g)$ is a non-constant product of derivatives and shifts of $g(z)$.  The coefficients $b_l$ in \eqref{Q} are all rational. Now, letting $E_1=\{\theta\in[0,2\pi):|g(re^{i\theta})|\leq 1\}$ and $E_2=[0,2\pi) \setminus{E_1}$, we have
    \begin{equation}\label{twoparts}
    \begin{split}
     m\left(r,\frac{1}{w-a_j}\right) &=  m\left(r,\frac{1}{g}\right) = \int_{\theta\in E_1}\log^+\left|\frac{1}{g(re^{i\theta})}\right|\frac{d\theta}{2\pi}.
    \end{split}
    \end{equation}
Moreover, for all $z=re^{i\theta}$ such that $\theta\in E_1$,
    \begin{equation*}
    \begin{split}
    \left|\frac{Q(z,g)}{g}\right| & =\frac{1}{|g|} \left| \sum_{l\in L} b_l(z)g(z)^{l_{0,0}}g(z+c_1)^{l_{1,0}}\cdots g(z+c_\nu)^{l_{\nu,0}}g'(z)^{l_{0,1}}\cdots g^{(\mu)}(z+c_\nu)^{l_{\nu,\mu}}\right|\\
    &\leq  \sum_{l\in L} |b_l(z)|\left|\frac{g(z+c_1)}{g(z)}\right|^{l_{1,0}}\cdots\left|\frac{g(z+c_\nu)}{g(z)}\right|^{l_{\nu,0}}\cdot \left|\frac{g'(z)}{g(z)}\right|^{l_{0,1}}\cdots \left|\frac{g^{(\mu)}(z+c_\nu)}{g(z)}\right|^{l_{\nu,\mu}},\\
    \end{split}
    \end{equation*}
since $\deg_g(G_l)\geq 1$ for all $l\in L$ with $l=(l_{0,0},\ldots,l_{\nu,\mu})$. Now, since
    \begin{equation*}
    \begin{split}
    \log^+\left|\frac{1}{g(z)}\right| &\leq \log^+\left|\frac{R(z)}{g(z)}\right| + \log^+\left|\frac{1}{R(z)}\right| \\
    &=    \log^+\left|\frac{Q(z,g)}{g(z)}\right| + \log^+\left|\frac{1}{R(z)}\right|
    \end{split}
    \end{equation*}
by equation \eqref{Qeq}, it follows from \eqref{twoparts} by defining $c_0=0$  that
    \begin{equation}\label{mohonkoend}
    \begin{split}
    &m\left(r,\frac{1}{w-a_j}\right) \leq \int_{\theta\in E_1}\log^+\left|\frac{Q(z,g)}{g(z)}\right|\frac{d\theta}{2\pi} + O(\log r) \\
    & \qquad \leq \sum_{n=0}^\nu \sum_{m=0}^\mu l_{n,m} m\left(r,\frac{g^{(m)}(z+c_n)}{g(z)}\right) + O(\log r) \\
    & \qquad \leq \sum_{n=0}^\nu \sum_{m=0}^\mu l_{n,m} \left(m\left(r,\frac{g^{(m)}(z+c_n)}{g(z+c_n)}\right)+m\left(r,\frac{g(z+c_n)}{g(z)}\right)\right) + O(\log r).
    \end{split}
    \end{equation}
The claim that \eqref{ddemohonko} holds follows by applying the lemma on the logarithmic derivative, its difference analogue \cite[Theorem~5.1]{halburdkt:14} and \cite[Lemma 8.3]{halburdkt:14}, to the right hand side of \eqref{mohonkoend}.

To finish the proof, we observe that from the assumption \eqref{n_inequality_lemma} it follows that
    \begin{equation}\label{Nassumption}
    \sum_{j=1}^k N\left(r,\frac{1}{w-a_j}\right) \leq (\tau+\varepsilon)k\,N(r+s,w) + O(\log r)
    \end{equation}
where $\varepsilon>0$ is chosen so that $\tau+\varepsilon<1$.  The first main theorem of Nevanlinna theory now yields
    \begin{equation}\label{1mt}
    kT(r,w) = \sum_{j=1}^k \left(m\left(r,\frac{1}{w-a_j}\right) + N\left(r,\frac{1}{w-a_j}\right)\right) + O(\log r).
    \end{equation}
By combining \eqref{ddemohonko}, \eqref{Nassumption} and \eqref{1mt} it follows that
    \begin{equation}\label{kT}
    kT(r,w) \leq (\tau+\varepsilon)kN(r+s,w) + S(r,w) \leq (\tau+\varepsilon)k\,T(r+s,w) + S(r,w).
    \end{equation}
An application of \cite[Lemma~8.3]{halburdkt:14} yields $T(r+s,w)=T(r,w)+S(r,w)$, and so \eqref{kT} becomes
    \begin{equation*}
    T(r,w) \leq (\tau+\varepsilon)T(r,w) + S(r,w),
    \end{equation*}
which gives us the desired contradiction $T(r,w)=S(r,w)$ since $\tau+\varepsilon <1$. We conclude that $\rho_2(w)\geq 1$.
\end{proof}

\section{The proof of Theorem \ref{mainthm}}

Suppose that \eqref{maineq} has a non-rational meromorphic solution of hyper-order strictly less than one. Then, by taking the Nevanlinna characteristic function of both sides of \eqref{maineq} and applying an identity due to Valiron \cite{valiron:31} and Mohon'ko \cite{mohonko:71} (see also \cite{laine:93}), we have
    \begin{equation*}
    \begin{split}
    T\left(r,w(z+1) - w(z-1) + a(z)\frac{w'(z)}{w(z)}\right)&=T(r,R(z,w(z)))\\
    & = \deg_w(R(z,w(z))) T(r,w(z)) + O(\log r).
    \end{split}
    \end{equation*}
Thus by using the lemma on the logarithmic derivative and its difference analogue \cite{halburdkt:14}, it follows that
    \begin{equation}\label{Testimate1}
    \begin{split}
    \deg_w(R(z,w(z))) T(r,w(z)) &\leq T\left(r,w(z+1) - w(z-1)\right) +T\left(r,\frac{w'(z)}{w(z)}\right)+O(\log r)\\
    & \leq N\left(r,w(z+1) - w(z-1)\right) + m(r,w(z)) + \overline{N}(r,w(z)) \\ &\qquad + \overline{N}\left(r,\frac{1}{w(z)}\right) + S(r,w).
    \end{split}
    \end{equation}
On using \cite[Lemma~8.3]{halburdkt:14} to obtain
    \begin{equation*}
    \begin{split}
    N\left(r,w(z+1) - w(z-1)\right) &\leq N\left(r,w(z+1)\right) + N\left(r,w(z-1)\right) \\
    &\leq 2 N(r+1,w(z)) = 2N(r,w(z)) + S(r,w),
    \end{split}
    \end{equation*}
inequality \eqref{Testimate1} becomes
    \begin{equation}\label{Testimate2}
    \begin{split}
    \deg_w(R(z,w(z))) T(r,w(z)) &\leq T(r,w(z)) + N(r,w(z)) + \overline{N}(r,w(z)) \\
    &\qquad + \overline{N}\left(r,\frac{1}{w(z)}\right) + S(r,w).
    \end{split}
    \end{equation}
Therefore
    \begin{equation}\label{34T}
    (\deg_w(R(z,w(z)))-3) T(r,w(z)) \leq \overline{N}\left(r,\frac{1}{w(z)}\right) + S(r,w),
    \end{equation}
which implies that $\deg_w(R(z,w(z)))\leq 4$ and furthermore that in the case $\deg_w(R) = 4$ we have $\overline{N}(r,1/w)=T(r,w)+S(r,w)$.

Suppose now that the denominator of $R(z,w(z))$ has at least two distinct non-zero rational roots for $w$ as a function of $z$, say $b_1(z)\not\equiv0$ and $b_2(z)\not\equiv0$. Then we may write equation
 \eqref{maineq} in the form
    \begin{equation}\label{ddeclass2}
    w(z+1) - w(z-1) + a(z)\frac{w'(z)}{w(z)}=\frac{P(z,w(z))}{(w(z)-b_1(z))(w(z)-b_2(z))\widetilde Q(z,w(z))},
    \end{equation}
where $P(z,w(z))\not\equiv0$ and $\widetilde Q(z,w(z))\not\equiv0$ are polynomials in $w(z)$ of at most degree $4$ and $2$, respectively.  We do not exclude the possibility that $\widetilde Q(z,b_1(z))\equiv 0$ or $\widetilde Q(z,b_2(z))\equiv 0$.
We also assume that  $P(z,w(z))$ and $\widetilde Q(z,w(z))$ do not have any common roots. Then neither $b_1(z)$, nor $b_2(z)$ is a solution of \eqref{ddeclass2}, and so they satisfy the first condition of Lemma~\ref{nevlemma}. Assume now that $\hat z\in\C$ is any point where
    \begin{equation}\label{b1point}
    w(\hat z) = b_1(\hat z),
    \end{equation}
and such that none of the rational coefficients of \eqref{ddeclass2} have a zero or a pole at $\hat z$ and $P(\hat z,w(\hat z))\not=0$.
Let $p$ denote the order of the zero of $w-b_1$ at $z=\hat z$.  We will call such a $\hat z$ a {\em generic root of $w-b_1$ of order $p$}.

We will assume, often without further comment, that in similar situations we are only considering generic roots.  
Since the coefficients are rational, when estimating the corresponding unintegrated counting functions, the contribution from the non-generic roots can be included in a bounded error term, leading to an error term of the type $O(\log r)$ in the integrated estimates involving $T(r,w)$. Now, by \eqref{ddeclass2}, it follows that either $w(z+1)$ or $w(z-1)$ has a pole at $z=\hat z$ of order at least $p$. Without loss of generality we may assume that $w(z+1)$ has such a pole at $\hat z$. Then, by shifting the equation \eqref{ddeclass2}, we have
    \begin{equation}\label{ddeclass3}
    \begin{split}
    & w(z+2) - w(z) + a(z+1)\frac{w'(z+1)}{w(z+1)}\\
    &\qquad =\frac{P(z+1,w(z+1))}{(w(z+1)-b_1(z+1))(w(z+1)-b_2(z+1)) \widetilde Q(z+1,w(z+1))},
    \end{split}
    \end{equation}
which implies that $w(z+2)$ has a pole of order one at $z=\hat z$ provided that
    \begin{equation}\label{condition1}
    \deg_w(P) \leq \deg_w(\widetilde  Q) + 2.
    \end{equation}
We suppose first that \eqref{condition1} is valid. By iterating \eqref{ddeclass2} one more step, we have
    \begin{equation}\label{ddeclass4}
    \begin{split}
    &w(z+3) - w(z+1) + a(z+2)\frac{w'(z+2)}{w(z+2)}\\
    &\qquad=\frac{P(z+2,w(z+2))}{(w(z+2)-b_1(z+2))(w(z+2)-b_2(z+2)) \widetilde Q(z+2,w(z+2))}.
    \end{split}
    \end{equation}
Now, if $p>1$ then there must be a pole of order at least $p$ at $w(z+3)$. Hence, in this case, we can pair up the zero of $w-b_1$ at $z=\hat z$ together with the pole of $w$ at $\hat z + 1$ without the possibility of a similar iteration process starting from another point, say $z=\hat z +3$, and resulting in pairing the pole at $\hat z + 1$ with another root of $w-b_1$, or of $w-b_2$. Therefore, we have found a pole of order at least $p$ which can be uniquely associated with the zero of $w-b_1$ at $\hat z$. If, on the other hand, $p=1$ it may in principle be possible that there is another root of $w-b_1$ or of $w-b_2$ at $z=\hat z +3$ which needs to be paired with the pole of $w$ at $z=\hat z + 2$. But since now all of the poles in the iteration are simple, we may still pair up the root of $w-b_1$ at $z=\hat z$ and the pole of $w$ at $z=\hat z +1$. If there is another root of, say, $w-b_1$ at $z=\hat z + 3$ such that $w(\hat z + 4)$ is finite, we can pair it up with the pole of $w$ at $z=\hat z +2$. Thus for any $p\geq 1$ there is a pole of multiplicity at least $p$ which can be paired up with the root of $w-b_1$ at $z=\hat z$.

We can run the above process for roots of $w-b_2$ in completely analogous fashion without any possible overlap in the pairing of poles with the zeros of $w-b_1$ and $w-b_2$. By adding up all points $\hat z$ such that \eqref{b1point} is valid, and similarly for $b_2$, it follows that
    \begin{equation}\label{lemmacondition2val}
    n\left(r,\frac{1}{w-b_1}\right) + n\left(r,\frac{1}{w-b_2}\right) \leq n(r+1,w) + O(1).
    \end{equation}
Therefore the remaining condition \eqref{n_inequality_lemma} of Lemma~\ref{nevlemma} is satisfied, and so $w$ must be of hyper-order at least one by Lemma~\ref{nevlemma}.

We consider now the case where the opposite inequality to \eqref{condition1} holds, i.e.,
    $$
    \deg_w(P) > \deg_w(\widetilde  Q) + 2.
    $$
If $\deg_w(P)=3$, it immediately follows that $\deg_w(Q)=2$, and so the assertion \eqref{assertion1} holds in this case. 
Now assume that
    \begin{equation}\label{condition2}
    4 = \deg_w(P) > \deg_w(\widetilde  Q) + 2 =2
    \end{equation}
and suppose that $\hat z$ is a generic root of $w(z)-b_1(z)$ of order $p$. Then again, by \eqref{ddeclass2}, either $w(z+1)$ or $w(z-1)$ must have a pole at $z=\hat z$ of order at least $p$, and we suppose as above that $w(z+1)$ has the pole at $\hat z$. Then, it follows that $w(z+2)$ has a pole of order $2p$, and $w(z+3)$ a pole of order $4p$ at $z=\hat z$. Hence we can pair the root of $w-b_1$ at $z=\hat z$ and the pole of $w$ at $z=\hat z +1$ the same way as in the case \eqref{condition1}. Identical reasoning holds also for the roots of $w-b_2$, and so \eqref{lemmacondition2val} holds. Lemma~\ref{nevlemma} therefore yields that $w$ is of hyper-order at least one.

Suppose then that
    \begin{equation}\label{condition3}
    4 = \deg_w(P) > \deg_w(\widetilde  Q) + 2 =3,
    \end{equation}
and that $\hat z$ is a point satisfying \eqref{b1point}, and of order $p$. Since now $\deg_w(\widetilde Q)=1$, we may assume without loss of generality that
    \begin{equation*}
    \widetilde Q(z,w(z)) = w(z) - b_3(z),
    \end{equation*}
where $b_3(z)$ is a rational function of $z$. Suppose first that $b_3\not\equiv b_j$ for $j\in\{1,2\}$. Also, it follows by the assumption $Q(z,0)\not\equiv0$ that $b_3\not\equiv 0$. As before, it follows by \eqref{ddeclass2} that either $w(z+1)$ or $w(z-1)$ has a pole of order at least $p$ at $z=\hat z$, and we may again suppose that $w(z+1)$ has that pole. If $p>1$ then \eqref{ddeclass3} implies that $w(z+2)$ has a pole of order $p$, at least, at $z=\hat z$. Even if $w-b_j$ has a root at $z=\hat z+3$ for some $j\in\{1,2,3\}$, we have found at least one pole for each root of $w-b_j$ in this iteration sequence, taking multiplicities into account. Hence we can pair the root of $w-b_1$ at $z=\hat z$ and the pole of $w$ at $z=\hat z +1$ the same way as in cases \eqref{condition1} and \eqref{condition2}. However, if $p=1$ it may in principle be possible that the pole of the right hand side of \eqref{ddeclass3} at $z=\hat z$ cancels with the pole of the term
    \begin{equation*}
    a(z+1)\frac{w'(z+1)}{w(z+1)}
    \end{equation*}
at $z=\hat z$ in such a way that $w(\hat z +2)$ remains finite. If $w(\hat z +2)\not= b_j(\hat z)$ for $j\in\{1,2,3\}$, then it follows from \eqref{ddeclass4} that $w(z+3)$ has a pole at $z=\hat z$, and we can pair up the root of $w-b_1$ at $z=\hat z$ and the pole of $w$ at $z=\hat z +1$. If $w(\hat z +2)= b_j(\hat z)$ for some $j\in\{1,2,3\}$, it may happen that also $w(\hat z+3)$ stays finite. If all points $\hat z$ such that
    \begin{equation*}
    w(\hat z) = b_j(\hat z)
    \end{equation*}
are a part of an iteration sequence of this form, i.e., that
    \begin{equation*}
    w(\hat z) = b_{j_1}(\hat z), \qquad w(\hat z + 1) = \infty, \qquad w(\hat z) = b_{j_2}(\hat z), \qquad j_1,j_2\in\{1,2,3\},
    \end{equation*}
by adding up all roots of $w-b_j$, $j\in\{1,2,3\}$, we still have the inequality
    \begin{equation*}
    n\left(r,\frac{1}{w-b_1}\right) + n\left(r,\frac{1}{w-b_2}\right) + n\left(r,\frac{1}{w-b_3}\right) \leq 2 n(r+1,w) + O(1).
    \end{equation*}
Also, we have already noted that neither $b_1$, nor $b_2$ satisfy the equation \eqref{ddeclass2}. The same is true also for $b_3$, and so all conditions of Lemma~\ref{nevlemma} are satisfied. Hence the hyper-order of $w$ is at least one also in the case \eqref{condition3}.

Suppose now that the denominator of $R(z,w(z))$ in \eqref{maineq} has at least one non-zero rational root, say $b_1(z)\not\equiv0$. Then \eqref{maineq} can be written as
    \begin{equation}\label{ddeclassv2}
    w(z+1) - w(z-1) + a(z)\frac{w'(z)}{w(z)}=\frac{P(z,w(z))}{(w(z)-b_1(z))^n \check{Q}(z,w(z))},
    \end{equation}
where $P(z,w(z))\not\equiv0$ and $(w(z)-b_1(z))^n\check{Q}(z,w(z))$ are polynomials in $w(z)$ of at most degree $4$ and $n+m\leq 4$, respectively, and without common roots. Then $b_1(z)$ is not a solution of \eqref{ddeclass2}, and thus the first condition of Lemma~\ref{nevlemma} is satisfied for $b_1$. Assume first that $n\in\{2,3,4\}$, and suppose that $\hat z$ is generic root of $w(z)-b_1(z)$ of order $p$. 
Then either $w(z+1)$ or $w(z-1)$ has a pole of order $np$ at least, at $z=\hat z$, and we suppose without loss of generality that $w(\hat z+1)=\infty$ is such a pole. Suppose next that
    \begin{equation}\label{v2cond}
    \deg_w(P)\leq n+m.
    \end{equation}
Then $w(\hat z+2)$ is a pole of order one, and $w(\hat z+3)$ a pole of order $np$, at least. By continuing the iteration, it follows that $w(\hat z+4)$ is again a simple pole or a finite value. Therefore it may be that $w(\hat z+4) = b_1(\hat z+4)$, and so it is at least in principle possible that $w(\hat z+5)$ is a finite value. But even so, by adding up all roots of $w-b_1$ and poles of $w$ in the set $\{\hat z,\ldots,\hat z + 4\}$, and taking into account multiplicities of these points, we find that there are at least $2np+1$ poles for $2p$ roots of $w-b_1$. This is the ``worst case scenario'' in the sense that if $w(\hat z+4) \not= b_1(\hat z+4)$, or a root of $\check{Q}(z,w(z))$, then $w(\hat z+5)$ is a pole of order $np$, and we have even more poles for every root of $w-b_1$. By adding up the contribution from all points $\hat z$ to corresponding counting functions, it follows that
    \begin{equation*}
    n\left(r,\frac{1}{w-b_1}\right) \leq \frac{1}{n} n(r+4,w) + O(1).
    \end{equation*}
Thus both conditions of Lemma~\ref{nevlemma} are satisfied, and so the hyper-order of $w$ is at least one.

Assume now that
    \begin{equation}\label{v2cond2}
    \deg_w(P)\geq n+m+1.
    \end{equation}
Suppose again that $\hat z$ is a generic root of $w(z)-b_1(z)$ of order $p$. Then, as in the case \eqref{v2cond} either $w(\hat z+1)$ or $w(\hat z-1)$, say $w(\hat z+1)$, is a pole of order $np$ at least. This implies that $w(\hat z+2)$ is a pole of order $np$ at least, and so, the only way that $w(\hat z+4)$ can be finite is that $w(\hat z+3)=b_1(\hat z + 3)$, or $w(\hat z+3)$ is a root of $\check{Q}(z,w(z))$, with multiplicity $p$. Even if this would be the case, we have found $2np$ poles, taking into account multiplicities, that correspond uniquely to $2p$ roots of $w-b_1$. Therefore, we have
    \begin{equation*}
    n\left(r,\frac{1}{w-b_1}\right) \leq \frac{1}{n} n(r+3,w) + O(1)
    \end{equation*}
by going through all roots of $w-b_1$ in this way. Lemma~\ref{nevlemma} thus implies that the hyper-order of $w$ is at least one.

Suppose now that $Q(z,w)$ in the equation \eqref{maineq} has only one simple root, and assume first that
    \begin{equation}\label{n1cond}
    \deg_w(P)\geq 3.
    \end{equation}
We can therefore write the denominator of the right hand side of \eqref{maineq} in the form $Q(z,w)=w-b_1$. Let $\hat z$ be a generic root of $w(z)-b_1(z)$ of order $p$. Then, either $w(\hat z+1)$ or $w(\hat z-1)$ is a pole of order $p$ at least. We assume again without loss of generality that $w(\hat z+1)$ is a pole of order $p$. Then $w(\hat z+2)$ is a pole of order $2p$ at least, and $w(\hat z+3)$ is a pole of order $4p$, and so on. In this case we therefore have
    \begin{equation*}
    n\left(r,\frac{1}{w-b_1}\right) \leq \frac{1}{3} n(r+2,w) + O(1).
    \end{equation*}
Lemma~\ref{nevlemma} thus implies that the hyper-order of $w$ is at least one.

Assume now that $Q(z,w)$ in \eqref{maineq} has only one simple root, and
    \begin{equation}\label{n1cond2}
    \deg_w(P) \leq 2.
    \end{equation}
If $\deg_w(P) =2$, then $\deg_w(P)=\deg_w(Q)+1$ and thus the assertion \eqref{assertion1} holds. If $\deg_w(P) \leq 1$, then $\deg_w(R)=1$.

The final remaining case is the one where $R(z,w(z))$ is polynomial in $w(z)$. Then \eqref{ddeclass2} takes the form
    \begin{equation}\label{ddeclassv3}
    w(z+1) - w(z-1) + a(z)\frac{w'(z)}{w(z)}=P(z,w(z)),
    \end{equation}
where the degree of $P(z,w(z))$ is at most $4$. If $\deg_w(P)=1$, then \eqref{assertion1} holds, and if $\deg_w(P)=0$, it follows that $R(z,w)$ in \eqref{maineq} is a polynomial of degree $0$ as asserted. Assume therefore that $\deg_w(P)\geq 2$, and suppose first that $w(z)$ has either infinitely many zeros or poles (or both). Suppose that there is a pole or a zero of $w(z)$ at $z=\hat z$. Then either there is a cancelation with one of the coefficients, or $w(z)$ has a pole of order at least $1$ at $z=\hat z +1$, or at $z=\hat z - 1$. Since the coefficients of \eqref{ddeclassv3} are rational, we can always choose a zero or a pole of $w(z)$ in such a way that there is no cancelation with the coefficients. Suppose, without loss of generality, that there is a pole of $w(z)$ at $z=\hat z + 1$. By shifting \eqref{ddeclassv3} up, it follows that $w(z)$ has a pole of order $\deg_w(P)$, at least, at $z=\hat z + 2$, and a pole of order $(\deg_w(P))^2$ at $z=\hat z + 3$, and so on. The only way this string of poles with exponential growth in the multiplicity can terminate, or there can be a drop in the orders of poles, is that there is a cancelation with a suitable zero of a coefficient of \eqref{ddeclassv3}. But since the coefficients are rational and thus have finitely many zeros, and $w(z)$ has infinitely many zeros or poles, we can choose the starting point $\hat z$ of the iteration from outside a sufficiently large disc in such a way that no cancelation occurs. Thus,
    \begin{equation*}
    n(d+|\hat z|,w) \geq (\deg_w(P))^d
    \end{equation*}
for all $d\in \N$, and so
    \begin{equation*}
    \begin{split}
    \lambda_2(1/w) &= \limsup_{r\to\infty} \frac{\log\log n(r,w)}{\log r} \\
    &\geq \limsup_{d\to\infty} \frac{\log\log n(d+|\hat z|,w)}{\log (d+|\hat z|)}\\
    &\geq \limsup_{d\to\infty} \frac{\log\log (\deg_w(P))^d}{\log (d+|\hat z|)}=1.
    \end{split}
    \end{equation*}
Therefore, $\rho_2(w)\geq \lambda_2(1/w) \geq 1$.

Suppose now that $w(z)$ has finitely many poles and zeros, and that $\rho_2(w)<1$. Then
    \begin{equation}\label{w}
    w(z)=f(z)\exp(g(z)),
    \end{equation}
where $f(z)$ is a rational function and $g(z)$ is entire. By substituting \eqref{w} into \eqref{ddeclassv3}, it follows that
    \begin{equation}\label{reg}
    f(z+1)e^{g(z+1)} - f(z-1)e^{g(z-1)} + a(z)\left(\frac{f'(z)}{f(z)}+g'(z)\right) = P(z,f(z)\exp(g(z))).
    \end{equation}
Now, since $\rho_2(\exp(g(z)))<1$, it follows by the difference analogue of the lemma on the logarithmic derivatives, \cite{halburdkt:14}, that
    \begin{equation*}
    T\left(r,e^{g(z+1)-g(z)}\right)=m\left(r,e^{g(z+1)-g(z)}\right) = S(r,e^{g}),
    \end{equation*}
and similarly
    \begin{equation*}
    T\left(r,e^{g(z-1)-g(z)}\right)=m\left(r,e^{g(z-1)-g(z)}\right) = S(r,e^{g}).
    \end{equation*}
Hence, by writing \eqref{reg} in the form
    \begin{equation*}
    \begin{split}
    & e^{g(z)}\left(f(z+1)e^{g(z+1)-g(z)} - f(z-1)e^{g(z-1)-g(z)}\right) + a(z)\left(\frac{f'(z)}{f(z)}+g'(z)\right) \\
    \qquad &= P(z,f(z)\exp(g(z))),
    \end{split}
    \end{equation*}
and taking Nevanlinna characteristic from both sides, we arrive at the equation
    $$
    \deg_w(P) T(r,e^{g}) = T(r,e^{g}) + S(r,e^{g}) + O(\log r).
    $$
Since $\deg_w(P)\geq 2$ by assumption, this implies that $g$ is a constant. But this means that $w$ is rational, which is a contradiction. Thus $\rho_2(w) \geq 1$.

\section{The proof of Theorem \ref{0assumptionThm}}

If $z=\hat z$ is a zero of $w(z)$, then by \eqref{maineq4} there is a pole of $w(z)$ at $z=\hat z + 1$ or at $z=\hat z - 1$ (or at both points) unless
    \begin{equation}\label{abrational}
    a(\hat z)=0 \qquad  \textrm{or} \qquad b(\hat z)=\infty.
    \end{equation}
Since $a(z)$ and $b(z)$ are rational, there are only finitely many points such that \eqref{abrational} holds. By \eqref{simplezerosassumption} and by the assumption that $w(z)$ is non-rational, it follows that $w(z)$ has infinitely many zeros. Suppose that $\hat z$ is a zero of $w(z)$ such that \eqref{abrational} does not hold. We need to consider two cases.

Suppose first that there is a pole of $w(z)$ at both points $z=\hat z - 1$ and $z=\hat z + 1$. Then, from \eqref{maineq4} it follows that there are poles of $w(z)$ at $z=\hat z - 2$ and $z=\hat z + 2$. Now, at least in principle we may have $w(\hat z -3)=0=w(\hat z + 3)$. Hence, in this case we can find at least four poles of $w(z)$ (ignoring multiplicity) which correspond to three zeros (also ignoring multiplicity) of $w(z)$ and to no other zeros.

Assume now that there is a pole of $w(z)$ at only one of the points $z=\hat z +1$ and $z=\hat z - 1$. Without loss of generality we can then suppose that $w(z)$ has a pole at $z=\hat z+1$ (the case where the pole is at $z=\hat z-1$ is completely analogous). 
Let $w(z)$ have a pole of order $p$ at $z=\hat z$. Then,
    \begin{equation}\label{pseq}
    \begin{split}
    &  w(z-1) = K + O(z-\hat z), \qquad K\in \C, \\
    & w(z) = \alpha(z-\hat z)^p + O((z-\hat z)^{p+1}), \qquad  \alpha\in\C\setminus\{0\}\\
    & w(z+1) = -\frac{p a(z)}{z-\hat z} + O(1), \\
    & w(z+2) = \frac{a(z+1)}{z-\hat z} + O(1),\\
    & w(z+3) = \frac{a(z+2)-p a(z)}{z-\hat z}  + O(1)
    \end{split}
    \end{equation}
in a neighborhood of $\hat z$.  If $a(\hat z+2)-p a(\hat z)\ne 0$ then there are at least three poles of $w(z)$ for every two zeros of $w(z)$ (ignoring multiplicity) in the sequence \eqref{pseq}, even if $w(\hat z+4)=0$.
Since $a(z)\not\equiv 0$ is rational, $a(z+2)-p a(z)$ can only vanish at an infinite number of points if $p=1$ and $a(z)=a$ is a constant.

Suppose now that $w(z)$ has a simple zero at $z=\hat z$ and is analytic at $z=\hat z-1$. 
In this case the sequence of iterates in \eqref{pseq} becomes
    \begin{equation}
    \begin{split}
    &  w(z-1) = K + O(z-\hat z), \qquad K\in \C, \\
    & w(z) = \alpha(z-\hat z) + O((z-\hat z)^2),\qquad  \alpha\in\C\setminus\{0\}\\
    & w(z+1) = -\frac{a}{z-\hat z}  + b(z) + K + O(z-\hat z), \\
    & w(z+2) = \frac{a}{z-\hat z} + b(z+1)+b(z)+K + O(z-\hat z),\\
    & w(z+3) = b(z+2)-b(z+1) + O(z-\hat z).
    \end{split}
    \end{equation}
If $w(\hat z + 3)\not=0$, then there are two poles (at $z=\hat z + 1$ and $z=\hat z + 2$) in this sequence that can be uniquely grouped with the zero of $w(z)$ at $z=\hat z$. This can only be avoided if $b(z)=b$ is also a constant.
If $a$ and $b$ are not both constants then
    \begin{equation*}
    \overline{n}\left(r,\frac{1}{w}\right) \leq \frac34 \overline{n}(r+1,w) + O(1).
    \end{equation*}
Hence, for any $\varepsilon>0$,
    \begin{equation*}
    \overline{N}\left(r,\frac{1}{w}\right) \leq \left(\frac34+\frac\varepsilon 2\right) \overline{N}(r+1,w) + O(\log r),
    \end{equation*}
and so by using \cite[Lemma~8.3]{halburdkt:14} to deduce that $\overline{N}(r+1,w)=\overline{N}(r,w) + S(r,w)$, we have
    \begin{equation*}
    \overline{N}\left(r,\frac{1}{w}\right) \leq \left(\frac34+\frac\varepsilon 2\right) T(r,w) + S(r,w),
    \end{equation*}
    which contradicts the assumption of the theorem.

\section{The proof of Theorem \ref{0assumptionThmCase2}}

By \eqref{simplezerosassumption2} and by the assumption that $w(z)$ is non-rational, it follows that $w(z)$ has infinitely many zeros. If $z=\hat z$ a zero of $w(z)$ of order $p$, then by \eqref{maineq5} there is a pole of $w(z)$ of order $p+1$, at least, at $z=\hat z + 1$ or at $z=\hat z - 1$ (or at both points) unless
    \begin{equation}\label{abrational2}
    a(\hat z)=0 \qquad  \textrm{and} \qquad b(\hat z)=0, \qquad \textrm{or} \qquad c(\hat z)=\infty.
    \end{equation}
Since $a(z)$, $b(z)$ and $c(z)$ are rational, there are only finitely many points such that \eqref{abrational2} holds. Suppose that $\hat z$ is a zero of $w(z)$ such that \eqref{abrational2} does not hold. We need to consider two cases.

Suppose first that there is a pole of $w(z)$ at both points $z=\hat z - 1$ and $z=\hat z + 1$. Then, even if there are zeros of $w(z)$ at both $z=\hat z - 2$ and $z=\hat z + 2$, we can group together three zeros of $w$ (ignoring multiplicity) with at least four poles of $w$ (counting multiplicity).

Assume now that there is a pole of $w(z)$ at only one of the points $z=\hat z +1$ and $z=\hat z - 1$. Without loss of generality we can then suppose that $w(z)$ has a pole at $z=\hat z+1$ (the case where the pole is at $z=\hat z-1$ is completely analogous). Consider first the case where the zero is simple, and suppose that $c(z)\not\equiv0$. Then, in a neighborhood of $\hat z$,
    \begin{equation}\label{seq1}
    \begin{split}
    &  w(z-1) = K + O(z-\hat z), \qquad K\in \C, \\
    & w(z) = \alpha(z-\hat z) + O((z-\hat z)^2), \qquad  \alpha\in\C\setminus\{0\}\\
    & w(z+1) = \frac{a(z)}{\alpha(z-\hat z)^2}  + \frac{b(z)}{\alpha(z-\hat z)} + c(z) +  K + O(z-\hat z), \\
    & w(z+2) = c(z+1) + O(z-\hat z),\\
    & w(z+3) = \frac{a(z)}{\alpha(z-\hat z)^2}  + \frac{b(z)}{\alpha(z-\hat z)}  + O(1),
    \end{split}
    \end{equation}
where there can be at most finitely many $\hat z$ such that $c(\hat z+1)=0$. Hence there are two poles of $w(z)$ (counting multiplicity) corresponding to one zero (ignoring multiplicity) in this case.

Assume now that $c(z)\equiv 0$, $w(z)$ has a pole at $z=\hat z+1$, and that $w(\hat z - 1)$ is finite. Then, in a neighborhood of $\hat z$,
    \begin{equation}\label{seq2}
    \begin{split}
    &  w(z-1) = K + O(z-\hat z), \qquad K\in \C, \\
    & w(z) = \alpha(z-\hat z) + O((z-\hat z)^2), \qquad  \alpha\in\C\setminus\{0\}\\
    & w(z+1) = \frac{a(z)}{\alpha(z-\hat z)^2}  + \frac{b(z)}{\alpha(z-\hat z)} + O(1), \\
    & w(z+2) = \alpha\left(1-\frac{2a(z+1)}{a(z)}\right)(z-\hat z)+ O((z-\hat z)^2),\\
    & w(z+3) = \frac{a(z)(a(z+2)-2a(z+1)+a(z))}{(a(z)-2a(z+1))\alpha(z-\hat z)^2}  
   +\frac{\gamma(z)}{ \alpha (z-\hat z)} + O(1),
    \end{split}
    \end{equation}
    where 
 \begin{equation}\label{gammadef}
    \begin{split}
    \gamma(z) = & \frac{a(z)b(z+2)-(2a(z+1)-a(z))b(z)}{a(z)-2a(z+1)} \\
  &\quad  -
    \frac{2a(z+2)[a(z)a'(z+1)-a(z+1)a'(z)]}{(a(z)-2a(z+1))^2}.
    \end{split}
\end{equation}
If $w(\hat z + 3)$ is a pole of order two, then are at least four poles (counting multiplicities) in this sequence that can be uniquely grouped with the two zeros of $w(z)$ (ignoring multiplicities). The only way that $w(z)$ can have a simple pole at $z=\hat z +3$ is that
    \begin{equation}\label{asmalleq2}
    a(\hat z+2)-2a(\hat z+1)+a(\hat z) = 0
    \end{equation}
    and $\gamma(z)\not\equiv 0$.
But in this case from equation \eqref{maineq5} it follows that
    \begin{equation*}
    w(z + 4) = -\frac{\alpha a(z+3)}{\gamma(z)} +  O(z-\hat z)
    \end{equation*}
for all $z$ in a neighborhood of $\hat z$, and so $w(\hat z + 4)$ is finite and non-zero with at most finitely many exceptions. Thus we can group together three poles of $w(z)$ (counting multiplicities) and two zeros of $w(z)$ (ignoring multiplicities). The only way that $w(\hat z + 3)$ can be finite is that \eqref{asmalleq2} holds together with $\gamma\equiv 0$.

If the order of the zero of $w(z)$ at $z=\hat z$ is $p\geq 2$, then there are always at least three poles of $w(z)$ (counting multiplicity) for each two zeros of $w(z)$ (ignoring multiplicity) in sequence \eqref{seq1} and \eqref{seq2}.

If there are only finitely many zeros $\hat z$ of $w(z)$ such that \eqref{asmalleq2} and $\gamma(z)\equiv0$ both hold, then
    \begin{equation*}
    \overline{n}\left(r,\frac{1}{w}\right) \leq \frac34 n(r+1,w) + O(1).
    \end{equation*}
Hence, for any $\varepsilon>0$,
    \begin{equation*}
    \overline{N}\left(r,\frac{1}{w}\right) \leq \left(\frac34+\frac\varepsilon 2\right) N(r+1,w) + O(\log r),
    \end{equation*}
and so by using \cite[Lemma~8.3]{halburdkt:14} to deduce that $N(r+1,w)=N(r,w) + S(r,w)$, we have
    \begin{equation*}
    \overline{N}\left(r,\frac{1}{w}\right) \leq \left(\frac34+\frac\varepsilon 2\right) T(r,w) + S(r,w).
    \end{equation*}
This is in contradiction with \eqref{simplezerosassumption2}, and so there must be infinitely many points $\hat z$ such that \eqref{asmalleq2} and $\gamma(z)\equiv0$  are both satisfied. The only rational functions $a(z)$ satisfying \eqref{asmalleq2} at infinitely many points have the form $a(z)=\lambda + \mu z$, for some constants $\lambda$ and $\mu$.  
Equation $\gamma(z)\equiv0$  becomes
$$
\frac{b(z+2)}{a(z+2)}-\frac{b(z)}{a(z)}=2\frac{a(z+1)a'(z)-a(z)a'(z+1)}{a(z)a(z+2)}=\mu\left(\frac{1}{a(z)}-\frac 1{a(z+2)}\right).
$$
Hence
$b(z)=ka(z)-\mu$,
where $k$ is a constant (since $a$ and $b$ are assumed to be rational).  

\vskip 5 mm 

\noindent{\bf\Large Acknowledgements}
\vskip 3mm
The first author was partially supported by EPSRC grant EP/K041266/1.
The second author was partially supported by the Academy of Finland grants (\#286877) and (\#268009).

\def\cprime{$'$}
\providecommand{\bysame}{\leavevmode\hbox to3em{\hrulefill}\thinspace}
\providecommand{\MR}{\relax\ifhmode\unskip\space\fi MR }
\providecommand{\MRhref}[2]{%
  \href{http://www.ams.org/mathscinet-getitem?mr=#1}{#2}
}
\providecommand{\href}[2]{#2}

\end{document}